\documentclass[leqno]{article}
\usepackage{amsmath,amssymb,amsthm}
\usepackage{mathrsfs}
\usepackage{tikz}
\usetikzlibrary{cd}
\usepackage{enumitem}
\usepackage[final]{todonotes}
\usepackage[hidelinks]{hyperref}
\usepackage{graphicx}
\theoremstyle{definition}

\newcommand{\s}{\ T^\prime}
\newcommand{\y}{\ Y^\prime}

\theoremstyle{definition}
\newtheorem{definition}{Definition}[section]

\newtheorem{theorem}{Theorem}
\newtheorem{proposition}{Proposition}
\newtheorem*{remark}{Remark}
\newtheorem{lemma}{Lemma}
\newtheorem{example}{Example}
\newtheorem{notation}[equation]{Notation}

\newcommand{\Tsub}{T^\prime}
\newcommand{\Ysub}{Y^\prime}
\newcommand{\Xss}{X^{ss}}
\newcommand{\Mdp}{{M}^{\prime\prime}}
\newcommand{\Mp}{{M}^{\prime}}
\newcommand{\M}{M}
\newcommand{\omegat}{\omega_T}
\newcommand{\omegas}{\omega_{T^\prime}}
\newcommand{\Xsst}{X^{ss}_T}
\newcommand{\Xsss}{X^{ss}_{T^\prime}}

\newcommand{\lambdat}{\lambda_T}
\newcommand{\lambdas}{\lambda_{T^\prime}}

\newcommand{\latM}{M}

\newcommand{\C}{\mathbb{C}}

\newcommand{\Z}{\mathbb{Z}}
\newcommand{\setl}[1]{\left\{#1\right\}}
\newcommand{\sigmad}{\sigma^\vee}

\newcommand{\set}[2]{\left\{#1\,\middle|\,#2\right\}}
\newcommand{\gitquot}{\operatorname{/\!\!/}}
\newcommand{\map}[3]{#1_{#2#3}^{\prime}}
\newcommand{\sigmadu}{\sigma^\vee}

\newcommand{\Spec}{\text{Spec}}
\newcommand{\Q}{\mathbb{Q}}
\newcommand{\plD}{\mathfrak{D}}
\newcommand{\inpr}[2]{\left\langle #1,\, #2 \right\rangle}
\newcommand{\glsec}[2]{\Gamma\left( #1,\,#2 \right)}

\newcommand{\shfA}{\shf{A}}
\newcommand{\strshf}[1]{\mathscr{O}_{#1}}
\newcommand{\dual}[1]{{#1}^{\vee}}
\newcommand{\shf}[1]{\mathcal{A}}

\DeclareMathOperator{\Proj}{Proj}
\DeclareMathOperator{\Cone}{Cone}
\DeclareMathOperator{\Hom}{Hom}

\title{Downgrading of a T-variety \footnote{%
  MSC2020: 14M25; 14L30, 52B20
}}

\author{
  Pavankumar Dighe\footnote{\textit{Email:} \texttt{pavankumardighe@gmail.com}}
  \and
  Vivek Mohan Mallick\footnote{\textit{Email:} \texttt{vmallick@iiserpune.ac.in}}
}

\begin{document}
\maketitle
\begin{abstract}
For an affine $T$-variety $X$ with the action of a torus $T$, this paper provides a combinatorial description of $X$ with respect to the action of a subtorus $T' \subset T$ in terms of a $T/T'$-invariant pp-divisor. We also describe the corresponding GIT fan. 
\end{abstract}

\section{Introduction}
This paper studies normal algebraic varieties over algebraically closed fields of characteristic zero admitting
an action of an algebraic torus. Such varieties are called $T$-varieties in the literature.
The most common examples of $T$-varieties are toric varieties, where the dimension of the torus equals the dimension of the variety on which it acts.
In \cite{keknmusd:toroidalembeddingsi}, the authors studied toric varieties defined over discrete valuation rings, which, in some sense, are $T$-varieties \cite{vollmert:tvardvr}.
The study of toric varieties is facilitated by the existence of associated combinatorial data which encodes a gamut of geometric properties in terms of combinatorial properties.
In case of $T$-varieties, the associated data consists of a combination of algebraic geometric and combinatorial data.
Altmann and Hausen \cite{ah:affinetvar} described this for the affine case, and the same authors along with Suss \cite{ahs:gentvar} elaborated the picture for the general case.

As mentioned, a $T$-variety is a normal variety $X$ along with an effective action of a torus $T$. The \emph{complexity} of a $T$-variety is the number $c(X) = \dim X - \dim T$. The description of $T$-variety involves a variety of dimension $c(X)$ and some combinatorial data encoded in the form of pp-divisors, i.e.\ divisors where the coefficients come from the Grothendieck group associated with the semigroup of polyhedra having a common tail cone. These data were deduced using Geometric Invariant Theory applied to the action of the torus on the variety.
The presentment of $T$-varieties in terms of pp-divisors have been fertile and a lot of geometric properties can be translated into combinatorics.
A summary of the development till around 2012 can be found in \cite{aipsv:geomtvar}.

One useful way to construct examples of $T$-varieties is by taking a known $T$-variety, say $X$ with the action of a torus $T$ denoted, temporarily, by $T \circlearrowleft X$. Assume that $\Tsub \subset T$ is a subtorus. Then $X$ is also a $T$-variety with respect to the action of $\Tsub$, $\Tsub \circlearrowleft X$. $\Tsub \circlearrowleft X$ is called a \emph{downgrading} of $T \circlearrowleft X$.
The case when $X$ is a toric variety was already studied by Altmann and Hausen \cite[section 11]{ah:affinetvar}. Ilten and Vollmert \cite{iltenvollmert:upgrading} gave a description of downgrading, and also discussed the reverse construction of upgrading for complexity one $T$-varieties. Such a downgrade should have a description in terms of a pp-divisor which has an $T / \Tsub$ action. This was mentioned in \cite[remark 2.9]{iltenvollmert:upgrading} without proof. In the second part of the paper, we give the details of this construction.

In section \ref{prelim}, we briefly recall the language of pp-divisor and the notion of GIT-data (see \ref{gitdata}) for a $T$-variety. In section \ref{posetandgitfan}, we defined a poset [\ref{poset}] describing the generators of the GIT-fan for an affine toric variety. Using the poset, we describe the GIT-data for a toric variety and downgraded affine $T$-variety. We illustrate this by an example: Example \ref{mainexample}.
In section \ref{torusinvariantY} and \ref{torusinvariantppdivisor}, we prove that the base space $\Ysub$ is a $T$-variety and for the right choice of a section (cf \ref{sec}) there is a torus invariant pp-divisor $\plD^\prime$ on $\Ysub$ such that $X(\plD^\prime)=X$.

\subsection*{Acknowlegements}
The first author thanks UGC (UGC-Ref.No: 1213/(CSIR-UGC NET JUNE 2017) ) for funding the research
and IISER Pune for providing facilities. The second author thanks IISER Pune for providing 
an excellent environment to conduct this research.

\section{Preliminaries}\label{prelim}

\subsection{$T$-varieties}
A $T$-variety is an algebraic variety along with an effective action of an algebraic torus $T$. A complexity of $X$ is $\text{dim}(X)-\text{dim}(T)$.
Let $Y$ be a normal, semiprojective variety, and $N$ be a lattice of finite rank. A \emph{tail cone}, tail($\Delta$), of any polyhedron $\Delta \subset N_\Q$ is defined as
\begin{equation*}
  \text{tail}(\Delta) := \set{v \in N_\Q}{v + \Delta \subseteq \Delta}.
\end{equation*}

For a fixed strongly convex integral polyhedral cone $\sigma$ in $N_\Q$, the collection of polyhedra with tail cone $\sigma$ forms a semigroup under the Minkowski sum. We denote this semigroup by $\text{Pol}^+(N,\sigma)$. 
A \emph{polyhedral divisor} on $Y$ with a tail cone $\sigma$ is a formal sum
$\plD = \sum_i \Delta_i \otimes D_i$, where $D_i$ runs over all prime
divisors of $Y$, and the coefficients $\Delta_i \in \text{Pol}^+(N,\sigma) $, such that only finitely many $\Delta_i$ are different from $\sigma$.
For each $u \in \sigmad $, the \emph{evaluation} of $\plD$ at $u$ is the Weil divisor \begin{equation*}
  \plD(u) := \sum_i \biggl( \min_{v \in \Delta_i} \inpr{u}{v} \biggr) D_i.
\end{equation*}

\begin{definition}
    A \emph{proper polyhedral divisor or pp-divisor} on $(Y,N)$ is a polyhedral divisor $\plD$, such that for each $u \in \sigmad$, $\plD(u)$ is a semiample, rational Cartier divisor on $Y$, which is big whenever $u$ is in the relative interior of $\sigmad$.
\end{definition}

  \label{def:afftvarpld}
  An affine scheme associated to $\plD$ is defined as follows.
  Let, for $u \in \dual{\sigma} \cap M$, $\shfA_u = \strshf{Y}(\plD(u))$.
  Then, $\shfA = \bigoplus_{u \in \dual{\sigma} \cap M} \shfA_u$ is a sheaf
  of $\strshf{Y}$ algebras. Define, the \emph{affine scheme associated with}
  $\plD$ to be
  \begin{equation*}
    X(\plD) = \Spec \left( \bigoplus_{u \in \dual{\sigma} \cap M}
    \glsec{Y}{\shfA_u} \right) = \Spec (\glsec{Y}{\shfA}).
  \end{equation*}
By \cite[Theorem 3.1]{ah:affinetvar}, $X(\plD)$ is an affine $T$-variety of complexity $\text{dim}(Y)$. The following theorem says that every affine $T$-variety arises in this way.

\begin{theorem}{\cite[Theorem 3.4]{ah:affinetvar}} Let $X$ be a $T$-variety of a complexity $\text{dim}(X)- \text{dim}(T)$. Then, there is a pp-divisor $\plD$ on $(Y,N)$, such that $X(\plD) \cong X$ where $Y$ is a normal, semiprojective variety, and $N$ is a lattice of rank $\text{dim}(T) $.
\end{theorem}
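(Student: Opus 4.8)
The plan is to recover the combinatorial data intrinsically from the coordinate ring. Writing $X = \Spec A$, the $T$-action equips $A$ with an $M$-grading $A = \bigoplus_{u \in M} A_u$, where $M$ is the character lattice of $T$ and $N = \Hom(M,\Z)$ is the rank-$\dim T$ lattice of one-parameter subgroups. Effectiveness of the action forces the weights $\set{u}{A_u \neq 0}$ to generate $M$ as a group (otherwise a nontrivial finite or positive-dimensional subgroup of $T$ would act trivially), and normality of $X$ makes $A$ a normal domain. The first step is to extract the tail cone: let $\omega \subseteq M_\Q$ be the cone generated by the weight monoid, which is full-dimensional by the above, and set $\sigma := \dual{\omega} \subseteq N_\Q$. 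Full-dimensionality of $\omega$ makes $\sigma$ strongly convex, so it is a legitimate common tail cone, and $\dual{\sigma} = \omega$ is exactly the cone indexing the nonzero graded pieces.

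The heart of the argument is the construction of the base $Y$. I would build it as a normal semiprojective model of the rational quotient of $X$ by $T$. Concretely, $\Spec A_0 = X \gitquot T$ is the affine GIT quotient, and for a character $u$ in the relative interior of $\omega$ one has projective-over-affine GIT quotients $X \gitquot_u T = \Proj \bigoplus_{n \geq 0} A_{nu}$. Since there are only finitely many GIT chambers, I would take $Y$ to be a common normal modification dominating all of these quotients and projective over $\Spec A_0$; this makes $Y$ normal and semiprojective of dimension equal to the complexity $\dim X - \dim T$. The rational quotient map $X \dashrightarrow Y$ then lets me push the geometry of $X$ down to $Y$.

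Next I would convert the grading into divisorial data on $Y$. For each weight $u$ and each homogeneous $f \in A_u$, the principal divisor of $f$ descends to a $\Q$-divisor on $Y$; organizing these over all sections of weight $u$ produces a Weil divisor $D(u)$ with $\glsec{Y}{\strshf{Y}(D(u))} = A_u$. The inclusions $A_u \cdot A_{u'} \subseteq A_{u+u'}$ force the assignment $u \mapsto D(u)$ to be superadditive, with each coefficient concave and piecewise linear on $\dual{\sigma}$. By support-function (Legendre) duality this is equivalent to a polyhedral divisor $\plD = \sum_i \Delta_i \otimes D_i$ on $(Y,N)$: the coefficient of $D_i$ in $D(u)$ equals $\min_{v \in \Delta_i} \inpr{u}{v}$, and one recovers each polyhedron $\Delta_i \in \text{Pol}^+(N,\sigma)$ as the one whose support function is the $D_i$-coefficient of $D(\cdot)$, all sharing the tail cone $\sigma$.

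Finally I would check that $\plD$ is proper and that it reconstructs $X$. Semiampleness and rational Cartierness of each $\plD(u) = D(u)$ come from the GIT construction ($D(u)$ is globally generated where the quotient is a morphism, and Cartier on the chosen modification $Y$), while bigness for $u$ in the relative interior of $\dual{\sigma}$ follows from the quotient map being birational onto its image there. By construction $\glsec{Y}{\shfA_u} = \glsec{Y}{\strshf{Y}(\plD(u))} = A_u$ with matching multiplication, so $X(\plD) = \Spec \bigoplus_{u \in \dual{\sigma} \cap M} \glsec{Y}{\shfA_u} = \Spec A = X$ as $T$-varieties. The main obstacle I expect is the simultaneous realization in the construction of $Y$: proving that a single normal semiprojective base supports all the $A_u$ as sections of semiample divisors at once, and that the resulting assignment $u \mapsto D(u)$ is genuinely polyhedral with the integrality and convexity needed to define honest polyhedra $\Delta_i$ with common tail cone $\sigma$. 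Controlling the GIT chamber structure and exhibiting the common modification is the crux of the whole proof.
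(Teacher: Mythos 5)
The paper does not actually prove this statement: it is quoted verbatim from Altmann--Hausen \cite[Theorem 3.4]{ah:affinetvar} as background, so the only thing to compare your proposal against is the standard proof there, which is also the template that Sections \ref{torusinvariantY}--\ref{torusinvariantppdivisor} of this paper imitate for the subtorus action. Your outline follows that strategy faithfully: weight cone $\omega$ and tail cone $\sigma = \dual{\omega}$ from the grading, a normal semiprojective $Y$ dominating the finitely many GIT chamber quotients $\Proj \bigoplus_{n\geq 0} A_{nu}$ over $\Spec A_0$ (Altmann--Hausen take the normalization of the distinguished component of the inverse limit of the $Y_\lambda$, exactly as this paper does with $\Ysub = \text{Norm}(\overline{q(W_{\Tsub})})$), divisors $\plD(u)$ with $\glsec{Y}{\strshf{Y}(\plD(u))} = A_u$, piecewise linearity from the finiteness of the GIT fan, and Legendre duality to extract the polyhedral coefficients.

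The one genuine soft spot is your construction of $D(u)$. Saying that ``the principal divisor of $f$ descends'' and that ``organizing these over all sections of weight $u$'' produces a Weil divisor only pins down a divisor \emph{class}; to get actual divisors for which the superadditivity $D(u) + D(u') \leq D(u+u')$ and the polyhedrality of the coefficients make literal sense, you must make the choices coherently across all $u$. The standard device --- used both in Altmann--Hausen and in Section \ref{torusinvariantppdivisor} of this paper --- is to fix a section $s : M \to Q(A)^*$ of the degree map with $s(u)$ homogeneous of degree $u$ (this exists because the action is effective, so every character is realized in the graded field of fractions), and to define $\plD(u)$ by the local data $(Y_f, s(u)/f)$ for $f \in A_u$. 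Without some such multiplicative normalization your claim that the inclusions $A_u \cdot A_{u'} \subseteq A_{u+u'}$ ``force'' superadditivity does not yet parse. With that repair, and granting the GIT finiteness you correctly identify as the crux, your outline is the intended proof.
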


\subsection{GIT quotients}
In this section, we recall some results regarding geometric invariant theory of $T$-varieties from \cite{ah:affinetvar} and \cite{bh:gittoric}, which we need later. Consider the following setup. Let $\latM$ be a finite rank lattice, and let $A$ be an integral, finitely generated, $\latM$-graded 
$\C $-algebra
\[ A= \bigoplus_{u \in \latM} A_u.   \]
Consider an affine variety $X=\text{Spec}(A)$ with an action of the algebraic torus $T=\Spec(\C[\latM])$, induced by the $\latM$-grading on $A$. Let $L$ be the trivial line bundle on $X$ with the following torus action: \[t\cdot(x,c)=(t \cdot x,\chi^u(t)\cdot c),\]  where $\chi^u$ is the character corresponding to  $u \in \M $.
Now consider the canonical projection $L \to X$. This map is a torus equivariant map, and is the $T$-linearization of the trivial line bundle on $X$ with respect to $\chi^u$. 
\begin{definition}[\cite{mfk:geominvth}]
A $T$-linearization of a line bundle on $X$ is a line bundle  $L \to X$ along with a fiberwise linear $T$-action on $L$ such that the projection map is a torus equivariant.

\end{definition}
\begin{remark}{\cite{MR3888690}}
Any $T$-linearization of a trivial line bundle over $X$ is the linearization corresponding to the unique character, described above. 
\end{remark} 
\begin{remark}
    
An invariant section of the linearization of a trivial line bundle with respect to $\chi^u$ is precisely an element of some $A_{nu}$ for $n>0$.
\end{remark}
\begin{definition}[semistable points] The set of semistable points associated to a linearization of a trivial line bundle is denoted by $\Xss(u)$ and defined as:

 \[\Xss(u) := \bigcup_{f \in A_{nu}, \; n \in \Z_{>0} } X_f.\] \label{semistablepoint}
\end{definition}
If two linearized line bundles have same set of semistable points, 
we say that they are \emph{GIT-equivalent}. We recall the description of the GIT-equivalence classes given by a linearization of the trivial line bundle in terms of the orbit cones. We will illustrate this by an example of an affine toric variety. The following definitions are from [BH06].

\begin{definition}Consider a point $x \in X$. The \emph{orbit monoid} associated to $x \in X$ is the submonoid $S_T(x) \subset M$ consisting of all $u \in M$ that admit an $f \in A_u$ with $f(x) \neq 0$.
    The convex cone generated by $S_T(x)$ is called the \emph{orbit cone}, denote it by $\omega_T(x)$.
   The sublattice generated by the orbit cone $\omega_T(x)$ is called the \emph{orbit lattice}, denote it by $M(x)$.

\end{definition}

\begin{definition}
     The \emph{weight cone} $\omega \subset \M$ is a cone generated by $u \in M$ with $A_u \neq \setl{0}$
\end{definition}

\begin{definition}[\emph{GIT-cone}]
The \emph{GIT-cone} associated to an element $u \in \omega \cap \latM$ is the intersection of all orbit cones containing $u$, and is denoted by $\lambda(u)$. The collection of GIT-cones forms a fan and called a \emph{GIT-fan}.
\end{definition}
For the sake of brevity, we summarized all this data associated with a torus action on an affine variety as follows.
\begin{definition}\label{gitdata}
   Suppose $X$ is a $T$-variety with the action of a torus $T$. The \emph{GIT-data} associated with $(X,T)$ consists of orbit monoids, orbit cones, orbit lattices, GIT-cones, and set of semistable points.
\end{definition}
From \cite[Proposition 2.9]{bh:gittoric}, we have an order-reversing  one-to-one correspondence between the possible sets of semistable points induced by a linearization of the trivial line bundle and GIT-cones.
Consider an affine toric variety $X=\Spec(A)$ with a character lattice $M$, and dual lattice $N$. Note : \[ A=\bigoplus_{ u \in \sigmad \cap M} \C \cdot \chi^{u}\] where $\sigma$ is the polyhedral cone in $N$, and $\sigmad$ is its dual. The cone $\sigmad$ is a full dimensional cone, and each $u \in \sigmad$ is saturated\footnote{An element $u \in M$ is saturated if $A_{(v)}=\bigoplus_{n \in \Z_{\geq 0} } A_{nv}$ generated by degree one elements.}.
Using \ref{semistablepoint}, we will compute the $\Xss(u)$ . Consider a minimal generating set  $\{u_1,u_2 \dots u_k \}$ of a cone $\sigmad \cap M$. 
For $u \in \sigmad \cap M$,
\[ \Xss(u)= \Spec(A_{\chi^{u}})  \]. If $u=\alpha_1 \cdot u_1 + \alpha_2 \cdot u_2 \dots + \alpha_k \cdot u_k$ with $\alpha_i \geq 0$ then \[ \Xss(u)=  \Spec(A_{\chi{u}})= \bigcap_{\alpha_i \neq 0 } \Spec(A_{u_i}).\]

\section{Description of GIT-fans and semistable points. }\label{posetandgitfan}
\begin{lemma}[Collection of all semistable points] Let $\{u_1,u_2 \dots u_k \}$ be a minimal generating set of a cone $\sigmad \cap M$. The collection of a semistable points is $\set{\Xss(u)}{u=\alpha_1 \cdot u_1 + \alpha_2 \cdot u_2 \dots + \alpha_k \cdot u_k, \alpha_i= 0,1}$.
\end{lemma}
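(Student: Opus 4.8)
The plan is to read the statement off the formula $\Xss(u)=\bigcap_{\alpha_i \neq 0}\Spec(A_{u_i})$ established immediately above it, whose essential feature is that the right-hand side depends only on \emph{which} coefficients are nonzero and not on their actual values. First I would take an arbitrary weight $u \in \sigmad \cap M$. Since $\{u_1,\dots,u_k\}$ generates the monoid $\sigmad \cap M$, I can write $u=\sum_i \alpha_i u_i$ with $\alpha_i \in \Z_{\geq 0}$, and then $\chi^{u}=\prod_i (\chi^{u_i})^{\alpha_i}$. Consequently a point lies in $\Spec(A_{\chi^u})=\Xss(u)$ exactly when $\chi^{u_i}$ is nonvanishing there for every index in the support $S:=\{\,i : \alpha_i \neq 0\,\}$, which gives
\[
  \Xss(u)=\bigcap_{i \in S}\Spec(A_{u_i})=\bigcap_{i \in S}\Xss(u_i).
\]

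Next I would introduce the $0/1$-combination $u_S:=\sum_{i \in S} u_i$, that is, the element obtained by replacing every nonzero coefficient of $u$ by $1$. Running the identical computation on $u_S$ produces $\Xss(u_S)=\bigcap_{i \in S}\Xss(u_i)$, which is precisely the set just computed for $u$; hence $\Xss(u)=\Xss(u_S)$. This establishes the containment that carries the content of the lemma: every semistable set arising from a weight in $\sigmad \cap M$ already arises from a $0/1$-combination of $u_1,\dots,u_k$. The reverse containment is immediate, since each $u_S$ itself lies in $\sigmad \cap M$, so every $\Xss(u_S)$ is one of the semistable sets under consideration. The two collections therefore coincide, as claimed.

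The step requiring the most care is the passage from $u$ to $u_S$: the decomposition $u=\sum_i \alpha_i u_i$ need not be unique, so one must be sure the associated intersection is well defined before discarding the coefficients. This is automatic here, because $\Xss(u)$ is intrinsically determined by $u$ through Definition~\ref{semistablepoint}; any two decompositions yield the same set, and I only need the single fixed representative $u_S$. For the write-up I would also record the conceptual reason via the GIT-fan: by the order-reversing correspondence of \cite{bh:gittoric}, distinct semistable sets correspond to distinct GIT-cones, which in the toric case are exactly the faces of $\sigmad$, and the construction above amounts to observing that the face whose relative interior contains $u_S$ is the smallest face of $\sigmad$ containing $\{u_i : i \in S\}$. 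Ranging over all subsets $S$ then realizes every face, hence every GIT-cone and every semistable set, which explains why the $0/1$-combinations suffice.
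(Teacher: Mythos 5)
Your proposal is correct and follows essentially the same route as the paper: both rest on the identity $\Xss(u)=\bigcap_{\alpha_i\neq 0}\Xss(u_i)$ established just before the lemma, and both conclude that $\Xss(u)$ depends only on the support of the coefficients, hence coincides with $\Xss\bigl(\sum_{\alpha_i\neq 0}u_i\bigr)$. Your additional remarks on well-definedness of the support and the GIT-fan interpretation go beyond the paper's one-line proof but do not change the argument.
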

\begin{proof} First observe that above collection is a finite set. If $u=\alpha_1 \cdot u_1 + \alpha_2 \cdot u_2 \dots + \alpha_k \cdot u_k$ then \[\Xss(u)=\Xss(\sum_{\alpha_i \neq 0}u_i)=\bigcap_{\alpha_i \neq 0}\Xss(u_i)\].\end{proof}

From  \cite[Proposition 2.9]{bh:gittoric}, we are going to compute GIT-cone for each $\Xss(u)$. To do this we are going to compute $\Xss(u_i)$ for each $i \in \{1,2 \dotsc , k \}$. Consider the following poset, \[\Biggl( S=\set{\sum_{i=1}^k \alpha_i \cdot u_i}{\alpha_i=0,1}, \geq \Biggl) \]
where, for $v,w \in S$, $ v \geq w$ \label{poset}if $\Xss(v) \subset \Xss(w) $.

\begin{lemma}[Collection of all GIT-cones] With the above notations continuing, for $v \in S$, GIT-cone $\lambda_T(v)$ is generated by the set $\set{u_i}{v \geq u_i}$.
    
\end{lemma}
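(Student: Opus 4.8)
The plan is to reduce the statement to the elementary fact that a face of the cone $\sigmad$ is generated by those generators $u_i$ that lie in it, after first identifying the orbit cones of the toric variety $X$ with the faces of $\sigmad$ and then describing each $\Xss(u)$ orbit by orbit. First I would recall the orbit--cone dictionary for $X = \Spec(\C[\sigmad \cap M])$. The $T$-orbits of $X$ are indexed by the faces $\tau$ of $\sigma$, and for a point $x$ in the orbit $O_\tau$ the character $\chi^u$ (for $u \in \sigmad \cap M$) is either identically zero or nowhere zero on $O_\tau$, being nonzero precisely when $u \in \tau^{\perp}$. Hence the orbit cone is the dual face
\[ \omega_T(x) = \sigmad \cap \tau^{\perp}, \]
and as $\tau$ ranges over the faces of $\sigma$ this exhausts all faces of $\sigmad$. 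Thus the orbit cones of $X$ are exactly the faces of $\sigmad$.

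Next I would describe the semistable sets in these terms. Since $A_u = \C \cdot \chi^u$ is one-dimensional for $u \in \sigmad \cap M$, the orbit monoid is saturated and $\Xss(u) = X_{\chi^u}$ is the locus where $\chi^u$ does not vanish; by the previous paragraph an orbit meets this locus iff $u$ lies in its orbit cone, so
\[ \Xss(u) = \set{x \in X}{u \in \omega_T(x)}. \]
Consequently, for $v \in S$ and a generator $u_i$, the relation $v \geq u_i$, i.e.\ $\Xss(v) \subseteq \Xss(u_i)$, holds if and only if every orbit cone containing $v$ also contains $u_i$. As orbit cones are faces of $\sigmad$, this is equivalent to $u_i$ lying in the intersection of all faces of $\sigmad$ containing $v$, which is by definition the GIT-cone $\lambda_T(v)$. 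Therefore
\[ \set{u_i}{v \geq u_i} = \set{u_i}{u_i \in \lambda_T(v)}. \]

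Finally I would invoke the standard property of faces of a finitely generated cone. Writing $\sigmad = \Cone(u_1, \dotsc, u_k)$ and cutting out a face by a supporting functional $m$ with $\inpr{m}{\cdot} \geq 0$ on $\sigmad$, so that $F = \set{u \in \sigmad}{\inpr{m}{u} = 0}$, any point of $F$ expressed as $\sum_i \gamma_i u_i$ with $\gamma_i \geq 0$ satisfies $\sum_i \gamma_i \inpr{m}{u_i} = 0$; since each term is nonnegative, $\gamma_i = 0$ whenever $u_i \notin F$. Thus $F$ is generated by the $u_i$ lying in it. Applying this with $F = \lambda_T(v)$ and combining with the displayed equality gives $\lambda_T(v) = \Cone(\set{u_i}{v \geq u_i})$, as claimed.

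The main obstacle I anticipate is making the orbit--cone dictionary precise and verifying that the inclusion $\Xss(v) \subseteq \Xss(u_i)$ is genuinely equivalent to $u_i \in \lambda_T(v)$. This hinges on the fact that the orbit cones of an affine toric variety exhaust \emph{all} faces of $\sigmad$, so that ``belonging to every orbit cone containing $v$'' coincides with ``belonging to the smallest face of $\sigmad$ containing $v$''. Once this identification is secured, the concluding cone-generation step is routine.
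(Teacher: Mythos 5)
Your proof is correct, but for the harder inclusion it takes a genuinely different route from the paper's. The easy inclusion is handled the same way in both arguments: $v \geq u_i$ forces $u_i$ to lie in every orbit cone containing $v$, hence in $\lambda_T(v)$. For the converse, the paper stays inside the GIT formalism: given a lattice point $u \in \lambda_T(v)$ it deduces $\Xss(v) \subseteq \Xss(u)$ from the order-reversing correspondence between GIT-cones and sets of semistable points, writes $\Xss(u) = \bigcap_{\alpha_i \neq 0} \Xss(u_i)$ using the preceding lemma, and concludes $v \geq u_i$ for every summand $u_i$ of $u$, so $u \in \Cone(\set{u_i}{v \geq u_i})$. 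You instead invoke the toric orbit--cone correspondence to identify the orbit cones of $X$ with the faces $\sigmad \cap \tau^{\perp}$ of $\sigmad$, so that $\lambda_T(v)$ becomes the smallest face of $\sigmad$ containing $v$, and you finish with the elementary fact that a face of $\Cone(u_1, \dotsc, u_k)$ is generated by those $u_i$ it contains. Both of your key inputs --- the identification $\Xss(u) = X_{\chi^u} = \set{x \in X}{u \in \omega_T(x)}$ and the face-generation step --- are sound, so I see no gap. The trade-off: the paper's argument uses less toric structure (essentially only that the $u_i$ generate the weight monoid and the GIT correspondence) and so adapts more readily beyond the toric setting, whereas yours is specific to toric varieties but yields a sharper structural statement as a byproduct, namely that every GIT-cone of an affine toric variety is a face of $\sigmad$ and the GIT-fan is precisely the face fan of $\sigmad$.
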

\begin{proof}
The cone generated by set $\set{u_i}{v \geq u_i}$ is denoted by $\sigma_T(v)$. First, observe that if $\Xss(v) \subset \Xss(u_i)$, then for $x \in X$, $\chi^{v}(x) \neq 0$ if and only if $\forall \; u_i \leq v$, $\chi^{u_i}(x) \neq 0$. Hence $u_i \in \omegat(x) $ for all $x$ such that $\chi^{v}(x) \neq 0$ hence $u_i \in \lambdat(v)$, so $\sigma_T(v) \subset \lambdat(v)$. If $u \in \lambdat(v)$, then $\Xss(v) \subset \Xss(u) \subset \Xss(u_i)$ where $u_i$ is a summand of $u$ hence $v \geq u_i$ hence $u \in \sigma_T(v)$.

\end{proof}

\subsection{Description of a GIT-fan with respect to the
action of a subtorus}
Consider the following setup. Let $X$ be a normal affine variety  with an effective $T$-action. Consider a subtorus  $\s$ of the torus $T$ with canonical action on  $X$. First, we have the following exact sequence from the torus inclusion,
\[  \begin{tikzcd}
    0 \arrow[r]  & \Mdp \arrow[r] & \M \arrow[r, "i"] & \Mp \arrow[r] & 0
\end{tikzcd}, \label{exactseq} \]
where $\M$ and $\Mp$ are the character lattices corresponding to the tori torus $T$ and $\s$, respectively. The lattice $\Mdp$ is the kernel of the lattice homomorphism $i$. If $X=\Spec(A)$, then we have a grading $A=\bigoplus_{u \in \M} A_u $ with respect to $T$ and, similarly, for the $\s$ action we have an induced grading $A=\bigoplus_{v \in \Mp} A_v $. In addition we have, \[ A_v= \bigoplus_{i(u)=v} A_u .\]
 We wish to compute the GIT data associated with $(X,\Tsub)$ from the GIT data associated with $(X,T)$ and above exact sequence. 
\begin{notation}
    We are using the same notation $i$ for lattice homomorphism and vector space homomorphism. 
\end{notation}

\begin{proposition} \label{subtorusgitdata} \begin{enumerate}
    \item Let $\omegat$ and $\omegas$ be the weight cones associated with the $T$ action and the $\Tsub$ respectively, then $i(\omegat) = \omegas$.

    Consider a point $x \in X$.
    
    \item Let $\omegat(x)$ and $\omegas(x)$ be the orbit cones associated with the $T$ and $\Tsub$ action respectively, then $i(\omegat(x)) = \omegas(x)$.
    \item Let $S_T(x)$ and $S_\Tsub(x)$ be the orbit monoid associated with the $T$ and $\Tsub$ action respectively, then $i(S_T(x)) = S_\Tsub(x)$.
\end{enumerate}
\end{proposition}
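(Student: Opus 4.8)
The plan is to prove the three statements by first establishing the monoid-level identity (3), then deducing the orbit-cone identity (2) from it, and finally handling the weight-cone identity (1) by the same argument used for (3). Two ingredients drive everything. The first is the compatibility of the two gradings recorded in the hypothesis, namely the decomposition $A_v = \bigoplus_{i(u)=v} A_u$ for each $v \in \Mp$, which encodes that $\Tsub \subset T$ is a subtorus. The second is the fact that passing from a set to the convex cone it generates commutes with the $\Q$-linear map $i \colon \M_\Q \to \Mp_\Q$: for any subset $S$ one has $i(\Cone(S)) = \Cone(i(S))$, since a nonnegative combination $\sum_j \lambda_j s_j$ is sent by $i$ to $\sum_j \lambda_j\, i(s_j)$ and conversely. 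Because $A$ is finitely generated, all the cones in sight are rational polyhedral, so their linear images are again polyhedral, and this identity persists for the closed cones.

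For (3) I would argue by double inclusion. For $i(S_T(x)) \subseteq S_{\Tsub}(x)$, take $u \in S_T(x)$, so there is $f \in A_u$ with $f(x) \neq 0$; since $A_u$ is one of the summands of $A_{i(u)}$, the same $f$ witnesses $i(u) \in S_{\Tsub}(x)$. For the reverse inclusion, take $v \in S_{\Tsub}(x)$ and $g \in A_v$ with $g(x) \neq 0$, and write $g = \sum_{i(u)=v} f_u$ with $f_u \in A_u$. Evaluation at $x$ is the quotient ring homomorphism $A \to A/\mathfrak{m}_x$, hence additive, so $g(x) = \sum_{i(u)=v} f_u(x) \neq 0$ forces $f_u(x) \neq 0$ for at least one $u$ with $i(u) = v$. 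That $u$ lies in $S_T(x)$ and satisfies $i(u) = v$, giving $v \in i(S_T(x))$.

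Part (2) is then immediate: by definition $\omegat(x) = \Cone(S_T(x))$ and $\omegas(x) = \Cone(S_{\Tsub}(x))$, so applying the commutation identity above together with (3),
\[ i(\omegat(x)) = i(\Cone(S_T(x))) = \Cone(i(S_T(x))) = \Cone(S_{\Tsub}(x)) = \omegas(x). \]
Part (1) follows the template of (3) verbatim, with the condition ``$f \in A_u$ with $f(x) \neq 0$'' replaced throughout by ``$A_u \neq \setl{0}$'': the decomposition $A_v = \bigoplus_{i(u)=v} A_u$ shows that $A_v \neq \setl{0}$ if and only if $A_u \neq \setl{0}$ for some $u$ with $i(u) = v$, so $i$ carries the $T$-weight set onto the $\Tsub$-weight set, and taking cones gives $i(\omegat) = \omegas$.

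The only point requiring genuine care---the main obstacle, such as it is---is the reverse inclusions in (3) and (1). These are precisely where the hypothesis $A_v = \bigoplus_{i(u)=v} A_u$ must be invoked: without it one could not split a $\Tsub$-homogeneous element $g$ into $T$-homogeneous pieces and then isolate a single piece that is nonzero (respectively nonvanishing at $x$). I would make this direct-sum decomposition explicit at that step, and would also confirm that the cones are rational polyhedral so that the set-theoretic image $i(\Cone(\cdot))$ genuinely coincides with the generated cone $\Cone(i(\cdot))$ and no closure issues intervene.
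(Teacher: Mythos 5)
Your proof is correct and follows essentially the same route as the paper's (much terser) argument: both rest on the decomposition $A_v = \bigoplus_{i(u)=v} A_u$ together with the linearity of $i$, with the orbit-monoid and orbit-cone cases handled by the analogous nonvanishing-at-$x$ criterion. You simply supply the details the paper compresses into ``follows similarly,'' in particular the observation that $g(x)\neq 0$ forces some $T$-homogeneous component of $g$ to be nonzero at $x$.
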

\begin{proof}
    Note, $A_v=\underset{i(u)=v}{\bigoplus}A_u$ then $A_u \neq 0$ for some $u$ if and only if $A_v \neq 0$. Now the results follows from linearity of $i$ and definition of $\omegat$ (resp. $\omegas$). The statements for orbit monoids and orbit cones follows similarly
\end{proof}

\begin{proposition}\label{semistable} 
\begin{enumerate}
    \item Let
    $\Xsst (u)$ be the semistable point associated with $u \in \M$ and
    let $\Xsss (v)$ be the semistable point associated with $v \in \Mp$, then
    \[ \Xsss(v) = \underset{i(u)=nv, \; n \in \Z_{>0} }{\bigcup} \Xsst(u). \]


Because of the correspondence between the GIT-cones and sets of semistable points, we have the following result.

  \item   Let $\lambdat (u)$ be GIT-cone associated with $u \in \M$ ( under the $T$-action) and
     $\lambdas (v)$ be GIT-cone associated to $v \in \Mp$ ( under the $\Tsub$-action) then
     \[ \lambda^{\prime }(v)= \lambdas (v)= \bigcap_{i(u)=v} i(\lambdat (u)) . \]

\end{enumerate}
\end{proposition}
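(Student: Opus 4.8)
The plan is to prove (1) directly from the definition of semistable points and then to deduce (2) from (1) via the orbit-cone dictionary of Proposition~\ref{subtorusgitdata} together with the order-reversing correspondence of \cite[Proposition~2.9]{bh:gittoric}.

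For part (1), I would check the two inclusions of $\Xsss(v)=\bigcup_{i(u)=nv,\,n>0}\Xsst(u)$ by hand, using only Definition~\ref{semistablepoint} and the refinement $A_{v}=\bigoplus_{i(u)=v}A_{u}$ of the grading. For $\supseteq$: if $x\in\Xsst(u)$ with $i(u)=nv$, choose $g\in A_{mu}$ with $g(x)\neq 0$; since $i(mu)=mn\,v$, the element $g$ sits in the $\Tsub$-homogeneous component $A_{mn\,v}$ and so witnesses $x\in\Xsss(v)$. For $\subseteq$: if $x\in\Xsss(v)$, choose $f\in A_{mv}$ with $f(x)\neq 0$ and split $f=\sum_{i(u)=mv}f_{u}$ into its $T$-homogeneous pieces; as $f(x)\neq 0$, some $f_{u}(x)\neq 0$, and this $f_{u}\in A_{u}$ exhibits $x\in\Xsst(u)$ with $i(u)=mv$. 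This is the routine, computational half.

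For part (2), the first step is to rewrite GIT-cones as intersections of orbit cones over semistable loci. From Definition~\ref{semistablepoint} one sees that $\Xss(u)=\set{x\in X}{u\in\omegat(x)}$, because a positive multiple of $u$ lies in the orbit monoid precisely when $u$ lies in the orbit cone; hence $\lambdat(u)=\bigcap_{x\in\Xsst(u)}\omegat(x)$, and likewise $\lambdas(v)=\bigcap_{x\in\Xsss(v)}\omegas(x)$. Substituting part (1) and the identity $\omegas(x)=i(\omegat(x))$ of Proposition~\ref{subtorusgitdata} yields
\[
  \lambdas(v)=\bigcap_{x\in\Xsss(v)}\omegas(x)
  =\bigcap_{\substack{i(u)=nv\\ n>0}}\ \bigcap_{x\in\Xsst(u)} i\bigl(\omegat(x)\bigr),
\]
and the target is to identify this with $\bigcap_{i(u)=v} i\bigl(\bigcap_{x\in\Xsst(u)}\omegat(x)\bigr)=\bigcap_{i(u)=v} i(\lambdat(u))$.

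The step I expect to be hardest is the interchange of the linear map $i$ with the intersection of orbit cones: in general $i\bigl(\bigcap_{x}\omegat(x)\bigr)$ is strictly smaller than $\bigcap_{x} i(\omegat(x))$, so the inner intersections must be analysed rather than formally moved past $i$. To close this gap I would exploit that all orbit cones occurring in $\lambdat(u)$ contain the common ray $\Q_{\geq 0}u$, which maps onto $\Q_{\geq 0}v$, and use the rationality and finiteness of the family of orbit cones to try to produce, for each point of $\bigcap_{x} i(\omegat(x))$, a common preimage inside $\bigcap_{x}\omegat(x)$; verifying that such a common preimage always exists is the real content, and I would not be surprised if it required an additional hypothesis on the orbit cones. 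A secondary issue is reconciling the multiplier $n$ appearing in part (1) with the bare condition $i(u)=v$ in the statement: here I would invoke the scaling invariance $\lambdat(nu)=\lambdat(u)$ of GIT-cones together with the finiteness of the GIT-fan to argue that indexing over $i(u)=nv$ gives the same intersection as indexing over $i(u)=v$, provided the relevant GIT-cones are met by lattice points of the fibre $i^{-1}(v)$. Once both points are settled, the order-reversing correspondence of \cite[Proposition~2.9]{bh:gittoric} certifies that the cone produced is genuinely the GIT-cone $\lambdas(v)$.
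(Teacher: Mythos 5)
Your part (1) is correct and complete: the two inclusions follow exactly as you say from Definition~\ref{semistablepoint} and the refinement $A_v=\bigoplus_{i(u)=v}A_u$, and it is worth noting that the paper itself offers no argument for this item, so you are not diverging from anything --- you are supplying a proof where none is given. Likewise your reformulations $\Xsst(u)=\set{x}{u\in\omegat(x)}$ and $\lambdat(u)=\bigcap_{x\in\Xsst(u)}\omegat(x)$ are sound (using that the orbit monoid is closed under addition, so $u\in\omegat(x)\cap M$ iff $nu\in S_T(x)$ for some $n>0$).

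For part (2), however, you have correctly located the crux and then left it open, and the gap is real, not cosmetic. The inclusion $\bigcap_{i(u)=v}i(\lambdat(u))\subseteq\lambdas(v)$ is fine: for each $x$ with $v\in\omegas(x)=i(\omegat(x))$ one picks a rational $w\in\omegat(x)\cap i^{-1}(v)$, and then $i(\lambdat(w))\subseteq i(\omegat(x))=\omegas(x)$. The problem is the reverse inclusion $\lambdas(v)\subseteq i(\lambdat(u))$, which is exactly your interchange of $i$ with the intersection of orbit cones. Your hope that the common ray $\Q_{\geq0}u$ rescues the interchange fails: take $C_1=\Cone(e_1,e_2+e_3)$ and $C_2=\Cone(e_1,e_2-e_3)$ with $i$ the projection killing $e_3$; both cones contain $e_1$, which maps isomorphically, yet $i(C_1\cap C_2)=\Cone(i(e_1))$ while $i(C_1)\cap i(C_2)$ is the full quadrant. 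So no formal argument of the kind you sketch can close this, and the additional hypothesis you suspect is needed really is needed. The hypothesis that does the job in the toric case --- which is the case the paper actually computes in Example~\ref{mainexample} --- is that $\lambdat(u)$ is itself an orbit cone (the minimal face of $\sigmad$ containing $u$, since orbit cones of an affine toric variety are precisely the faces of $\sigmad$); then $i(\lambdat(u))$ is a $\Tsub$-orbit cone containing $v$, hence contains $\lambdas(v)$ by the very definition of the GIT-cone, and no interchange is ever performed. For a general affine $T$-variety $\lambdat(u)$ is only an intersection of orbit cones, and neither you nor the paper (whose entire justification for part (2) is the sentence about the order-reversing correspondence) supplies the missing argument; to make your proof complete you would either have to restrict to the situation where each $\lambdat(u)$ is an orbit cone, or prove a genuine Helly-type statement that $i^{-1}(p)$ meets $\bigcap_{x\in\Xsst(u)}\omegat(x)$ whenever $p\in\lambdas(v)$. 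Your secondary worry about indexing over $i(u)=v$ versus $i(u)=nv$ is also legitimate but benign: since the cones are rational and $\lambdat(nu)=\lambdat(u)$, one should simply read the intersection as running over rational $u\in\omega$ with $i(u)\in\Q_{>0}v$, which absorbs both issues.
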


\begin{example}\label{mainexample}
    Lets  take $\sigma=\Cone(\setl{e_1,e_2,e_1 + e_3,e_2 +e_3})$, then \[\C[S_\sigma]= \bigoplus_{u \in \sigmadu \cap M} \C \cdot \chi^u= \C[u,v,w,uvw^{-1}] \equiv \frac{\C[x,y,z,w]}{\langle xy-zw \rangle}  \]

where $\sigmadu=\Cone(\setl{e_1,e_2,e_3,e_1 + e_2- e_3})$.
For this example we have  GIT-fans shown in the figure \ref{ts} and semistable points correspondence,

\begin{align*}
    \Xss(e_1) & \longleftrightarrow \Cone(\setl{e_1}) \\
\Xss(e_2) & \longleftrightarrow \Cone(\setl{e_2})  \\
    \Xss(e_3) & \longleftrightarrow \Cone(\setl{e_3})  \\
     \Xss(e_1 +e_2)=\Xss(e_1+e_2+e_3+e_1+e_2 - e_3) & \longleftrightarrow \Cone(\setl{e_1 ,e_2})  \\
      \Xss(e_1 +e_2-e_3)=\Xss(e_1+e_2+e_1+e_2 - e_3) & \longleftrightarrow 
      \Cone(\setl{e_1 ,e_2,e_1 +e_2 -e_3})  \\
      \Xss(e_1+e_1+e_2 - e_3) & \longleftrightarrow \Cone(\setl{e_1,e_1+e_2 - e_3}) \\
      \Xss(e_2+e_1+e_2 - e_3) & \longleftrightarrow \Cone(\setl{e_2,e_1+e_2 - e_3}) \\
      \Xss(e_1+e_3) & \longleftrightarrow \Cone(\setl{e_1,e_3}) \\
      \Xss(e_2+e_3) & \longleftrightarrow \Cone(\setl{e_2,e_3}). 
\end{align*}

\end{example}

\begin{figure}
        \centering
        \includegraphics[width=8cm]{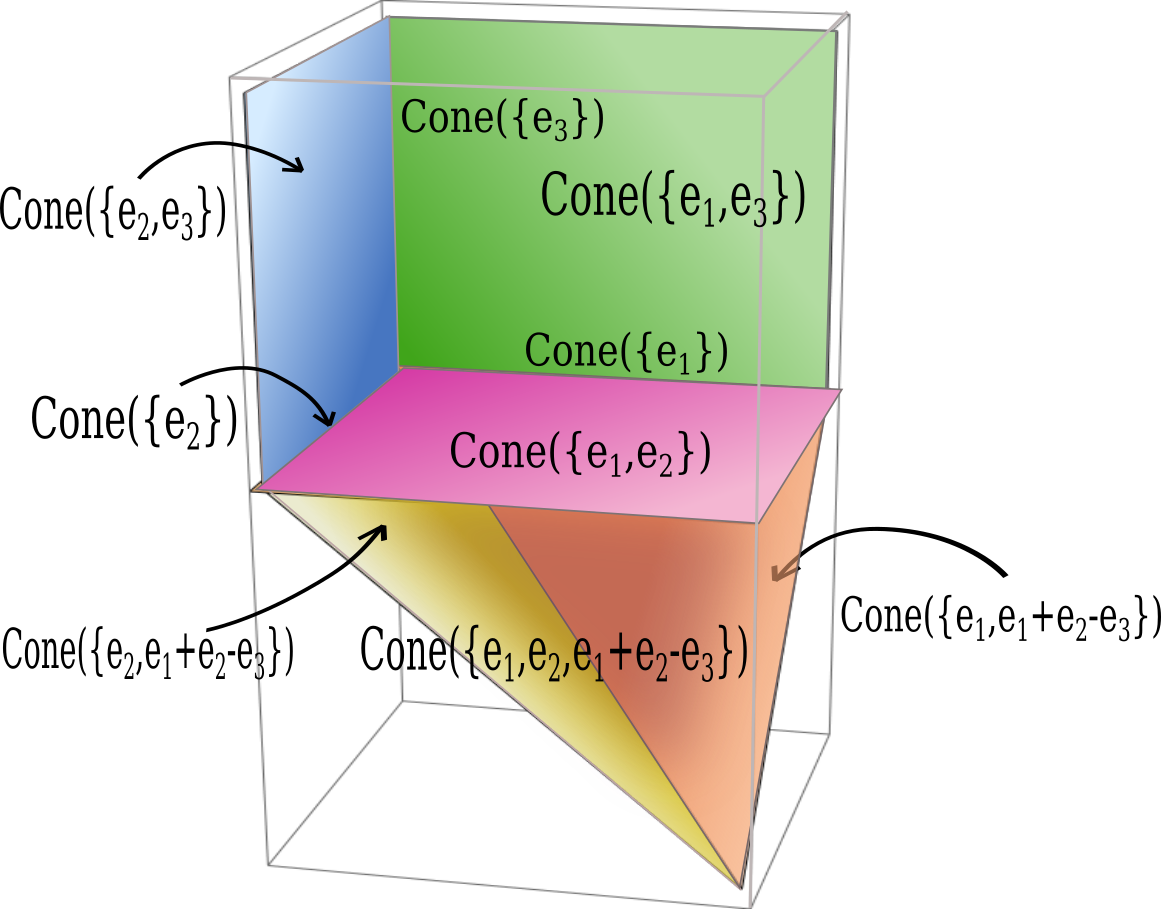}
        \caption{ $(\C^*)^3$-action}
        \label{ts}
    \end{figure}
Consider the above, the torus inclusion map $(\C^*)^2 \to (\C^*)^3 $ is given by the following map \[ (t_1,t_2) \mapsto (t_1,t_2,t_1).\] The lattice homomorphism associated with this inclusion is the map $\Z^3 \to \Z^2 $ is
\[ (a,b,c) \mapsto (a+c,b).\] 
From Proposition \ref{subtorusgitdata} and Proposition \ref{semistable}, the GIT-cones shown in the figure \ref{tdgs} and semistable points correspondence, $e_1^\prime=(1,0)$ and $e_2^\prime= (0,1)$
\begin{align*}
    \Xss(e_1^\prime)=\Xss(e_1)\cup \Xss(e_1 + e_3) \cup \Xss(e_3) \longleftrightarrow \Cone(\setl{e_1^\prime}) \\
    \Xss(e_2^\prime)= \Xss(e_2) \cup \Xss (e_1 + e_2 - e_3) \cup \Xss(e_2 + e_1 + e_2 -e_3) \longleftrightarrow \Cone(\setl{e_2^\prime})  \\
     \Xss(e_1^\prime + e_2^\prime)= \Xss(e_1 + e_2) \cup \Xss (e_1 + e_1 + e_2 - e_3) \cup \Xss( e_1 + e_3) \longleftrightarrow \Cone(\setl{e_1^\prime,e_2^\prime}). \\
\end{align*}

\begin{figure}
    \centering
    \includegraphics[width=4cm]{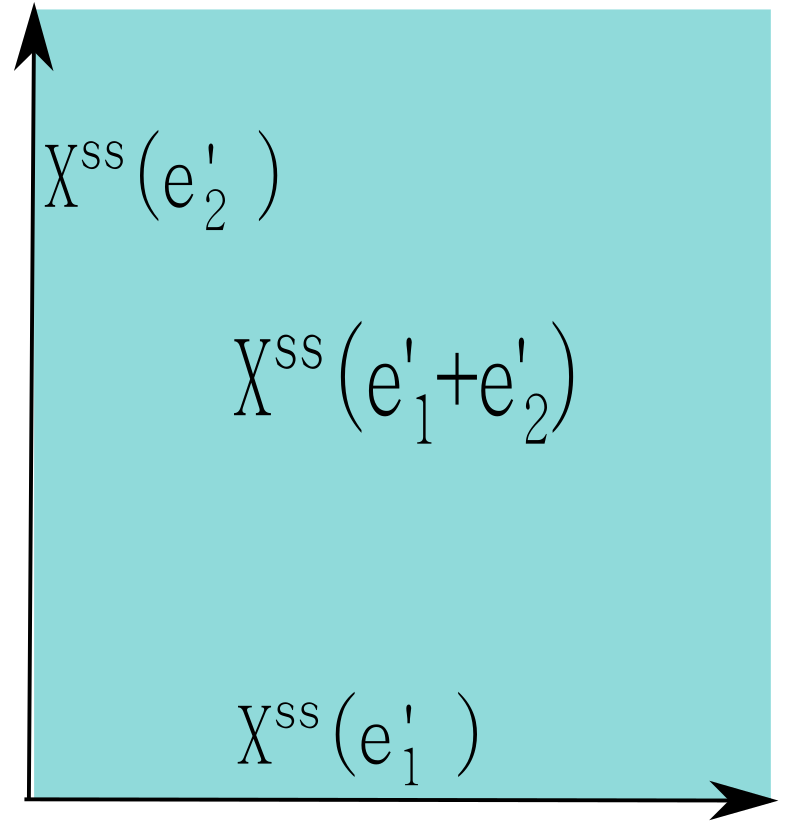}
    \caption{$(\C^*)^2$-action}
    \label{tdgs}
\end{figure}

\section{Description of the space associated to a $T$-
variety with respect to the action of a subtorus}\label{torusinvariantY}
By \cite{ah:affinetvar}, we have a proper polyhedral divisor associated with a normal affine variety with an effective torus action. Let $X$ be an affine $T$-variety, with a torus $T$. Let $\plD$ be a pp-divisor on $(Y,N)$, where $N$ is a dual lattice of a lattice $M=\Hom(T,\C^*)$, such that $X \cong X(\plD)=\Spec(A)$. We assume that $\plD$ is a minimal pp-divisor(\cite[Definition 8.7]{ah:affinetvar}). For a subtorus $\Tsub$ of the torus $T$, we shall construct $\Ysub$ and a $\frac{T}{\Tsub}$-invariant pp-divisor on $(\Ysub, N^\prime)$, where $N^\prime $ is a dual of a lattice $M^\prime= \Hom(T^\prime, \C^*)$.

\begin{theorem} \label{mainth}
Consider an affine  $T$-variety $X$ with the action of the torus $T$, with weight cone $\omega \subset M_\Q$. Let $\Tsub$ be a subtorus of $T$, and the associated lattice map be $i: M \to \Mp$. Then, there exists $\Ysub$ and a $\frac{T}{\Tsub}$-invariant pp-divisor $\plD^\prime$ on $(\Ysub,N^\prime)$ with $\text{tail}(\plD^\prime)=i(\omega)$ such that $X(\plD^\prime) \cong X$.
\end{theorem}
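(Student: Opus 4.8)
The plan is to combine the Altmann--Hausen correspondence with the GIT computations of Propositions \ref{subtorusgitdata} and \ref{semistable}. The existence of \emph{some} pp-divisor on a normal semiprojective base realizing $X$ as a $\Tsub$-variety is already guaranteed by \cite[Theorem 3.4]{ah:affinetvar}, since the $\Tsub$-action on $X$ is effective of complexity $\dim X - \dim \Tsub$. The genuine content of the theorem is therefore the \emph{equivariant refinement}: that the base may be taken to be a $T/\Tsub$-variety $\Ysub$, that the realizing pp-divisor $\plD'$ may be chosen $T/\Tsub$-invariant, and that its tail cone is the downgraded weight cone $i(\omega)$. First I would construct $\Ysub$ as the Altmann--Hausen base space associated to the $\Tsub$-action, i.e.\ as the normalized good quotient of a maximal $\Tsub$-semistable locus $\Xsss$ by $\Tsub$; Proposition \ref{semistable} describes this locus explicitly as a union $\bigcup_{i(u) = nv} \Xsst(u)$ of $T$-semistable loci, which is what ties $\Ysub$ to the original data on $Y$.

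Next I would produce the residual torus action. Writing the torus inclusion as the exact sequence $0 \to \Mdp \to \M \xrightarrow{i} \Mp \to 0$, the quotient torus is $T/\Tsub = \Spec \C[\Mdp]$, with character lattice $\Mdp = \ker i$. Because every subtorus of a torus is normal, the $T$-action on $X$ descends to a residual $T/\Tsub$-action on the $\Tsub$-quotient, and hence on $\Ysub$; concretely this is the action induced by the $\Mdp$-grading on the $\Tsub$-invariant ring $\bigoplus_{u \in \Mdp} A_u$, over whose spectrum $\Ysub$ sits properly. I would record the induced proper morphism $\Ysub \to Y$ and verify that it realizes $Y$ as the further quotient $\Ysub \gitquot (T/\Tsub)$, so that the whole picture is compatible with iterated quotients $(X \gitquot \Tsub) \gitquot (T/\Tsub) = X \gitquot T$.

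With $\Ysub$ in hand I would identify $\plD'$ and establish its invariance. Running the Altmann--Hausen construction for the $\Tsub$-action yields coefficients whose supporting cones are, by Proposition \ref{subtorusgitdata}, precisely the images under $i$ of the orbit cones for the $T$-action, so the tail cone of $\plD'$ is forced to be $i(\omega) = \omegas$ as claimed. For the invariance, the key observation is that for a connected torus $G = T/\Tsub$ a polyhedral divisor is $G$-invariant exactly when it is supported on $G$-invariant prime divisors with unchanged polyhedral coefficients; I would show, using the minimality hypothesis on $\plD$ (\cite[Definition 8.7]{ah:affinetvar}), that the coefficients of $\plD'$ are supported on the $T/\Tsub$-invariant divisors of $\Ysub$ --- namely the locus over which the $\Tsub$-quotient degenerates --- which is exactly $T/\Tsub$-invariance.

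Finally I would prove $X(\plD') \cong X$ by matching graded pieces. Here I would use the refined grading $A_v = \bigoplus_{i(u)=v} A_u$ and the identity $A_u = \glsec{Y}{\strshf{Y}(\plD(u))}$ for the original data, and reduce the isomorphism to the section identity $\glsec{\Ysub}{\strshf{\Ysub}(\plD'(v))} = \bigoplus_{i(u)=v} A_u$. I expect \textbf{this last step to be the main obstacle}: it requires reconciling the enlarged base $\Ysub$ and its $T/\Tsub$-structure with the finer $\Mp$-grading, decomposing global sections on $\Ysub$ into $\Mdp$-isotypic parts indexed by the fiber $i^{-1}(v)$, and simultaneously checking that each evaluation $\plD'(v)$ is a semiample rational Cartier divisor which is big for $v$ in the relative interior of the dual of $i(\omega)$ --- i.e.\ that $\plD'$ is genuinely \emph{proper}, not merely polyhedral. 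The minimality of $\plD$ and the explicit description of $\Xsss$ from Proposition \ref{semistable} are the tools I would lean on to control these properness estimates.
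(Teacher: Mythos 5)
Your overall architecture does match the paper's: build $\Ysub$ from the $\Tsub$-GIT data of Propositions \ref{subtorusgitdata} and \ref{semistable}, descend the residual $T/\Tsub$-action through the quotient maps, and rerun the Altmann--Hausen construction equivariantly. But two steps you lean on are not actually supplied, and one of them is where the paper's real content lies. First, $\Ysub$ is not the good quotient of a single (``maximal'') semistable locus. The paper, following \cite{ah:affinetvar}, takes $W_{\Tsub}=\bigcap_{\lambda'}W_{\lambda'}$, the intersection over the chambers of the $\Tsub$-GIT fan, maps it into the inverse limit $Y_1=\varprojlim Y_{\lambda'}$ of the chamber quotients $Y_{\lambda'}=\Proj(A_{(v)})$, and sets $\Ysub=\text{Norm}(\overline{q(W_{\Tsub})})$. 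The limit is unavoidable: the evaluation $\plD'(v)$ is only semiample on the quotient $Y_{\lambda'}$ for the chamber containing $v$, so the base must dominate all chambers for $\plD'$ to be a \emph{proper} polyhedral divisor; a single $Y_v$ suffices only when the $\Tsub$-GIT fan has one full-dimensional cone.

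Second, and more seriously, your invariance argument is an assertion where the paper has a construction. You claim the coefficients of $\plD'$ sit on $T/\Tsub$-invariant prime divisors, ``namely the locus over which the $\Tsub$-quotient degenerates,'' citing minimality of $\plD$; nothing in the proposal establishes this, and it is not the mechanism the paper uses. The paper's device --- the ``right choice of a section'' advertised in its introduction --- is to define $s'\colon \Mp\to Q(A)^*$ by $s'(v)=s(u_0)$ for one fixed lift $u_0\in i^{-1}(v)$, where $s$ is the $M$-homogeneous section coming from the effective $T$-action. On the invariant opens $\Ysub_f$ (with $f\in A_u$, $i(u)=v$) the local equation $s'(v)/f$ is then homogeneous of degree $\deg(s'(v))-\deg(f)\in\Mdp$, which directly exhibits each Cartier divisor $\plD'(v)$ as $T/\Tsub$-invariant; the same choice is what makes the graded-piece identity $\bigoplus_{i(u)=v}A_u=\Gamma\bigl(\Ysub,\strshf{\Ysub}(\plD'(v))\bigr)\cdot s'(v)$ --- exactly the step you flag as ``the main obstacle'' --- reduce to the Altmann--Hausen computation applied weight by weight. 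Without producing this section you have correctly located the difficulty but not removed it, so the proposal as written does not yet constitute a proof.
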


The next part of this paper is about the proof of the Theorem \ref{mainth}. Consider the semistable point $\Xsss(v)$, from \cite[Section 5]{ah:affinetvar}, the quotient space \\ $Y_v = \Xsss(v) \gitquot \Tsub$ is given by \[ Y_v = \Proj(A_{(v)}),\]  where $A_{(v)}=\bigoplus_{n \in \Z_{\geq 0} } A_{nv}$. 

\begin{proposition} \label{efp}

There is effective $\frac{T}{\s}$ action on $Y_v$ 

\end{proposition}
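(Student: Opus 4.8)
The plan is to produce the $\frac{T}{\Tsub}$-action by descending the ambient $T$-action along the GIT quotient $\pi\colon \Xsss(v)\to Y_v$, and then to verify effectiveness using rational semi-invariants. Since $T$ is abelian, $\Tsub$ is normal in $T$, and $\frac{T}{\Tsub}$ is again a torus whose character lattice is exactly the kernel $\Mdp$ of $i\colon \M\to\Mp$ in the exact sequence. Throughout I assume $v\in i(\omega)$, so that $\Xsss(v)\neq\emptyset$ and $Y_v=\Proj(A_{(v)})$ is a genuine nonempty quotient.

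First I would check that $Y_v$ carries a residual $T$-action. Recall $A_{(v)}=\bigoplus_{n\geq 0}A_{nv}$ with $A_{nv}=\bigoplus_{i(u)=nv}A_u$. Each $A_u$ is a $T$-weight space, so $A_{nv}$ is $T$-stable and the $T$-action preserves the $\Z_{\geq 0}$-grading of $A_{(v)}$; hence $A_{(v)}$ is a $T$-stable graded subalgebra and $T$ acts on $\Proj(A_{(v)})=Y_v$. Next I would show that $\Tsub$ acts trivially on $Y_v$: for every $u$ with $i(u)=nv$ the restricted character satisfies $\chi^u|_{\Tsub}=\chi^{i(u)}=\chi^{nv}$, so $\Tsub$ acts on the whole degree-$n$ piece $A_{nv}$ through the single character $(\chi^{v}|_{\Tsub})^{n}$. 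This is precisely the grading scaling that $\Proj$ collapses, so $\Tsub$ acts trivially on $Y_v$ and the $T$-action factors through an action of $\frac{T}{\Tsub}$. Equivalently, since $T$ commutes with $\Tsub$, each $t\in T$ makes $\pi\circ t$ a $\Tsub$-invariant morphism, which descends by the universal property of the good quotient.

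For effectiveness I would pass to rational functions. The good quotient gives $\C(Y_v)=\C(\Xsss(v))^{\Tsub}=\C(X)^{\Tsub}$, using that $\Xsss(v)$ is dense open in $X$ and that generic $\Tsub$-orbits in the semistable locus are closed (Rosenlicht). Decomposing into $T$-weight spaces, a rational function is $\Tsub$-invariant exactly when its $T$-weight lies in $\Mdp=\ker i$, so $\C(X)^{\Tsub}=\bigoplus_{u\in\Mdp}\C(X)_u$, and the weights of the $\frac{T}{\Tsub}$-action on $Y_v$ are $\{u\in\Mdp : \C(X)_u\neq 0\}$. A torus action on an irreducible variety is effective if and only if the weights of its semi-invariant rational functions generate the character lattice. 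The key point is that these weights form a subgroup, not merely a submonoid, because the quotient of two nonzero semi-invariants is again a nonzero semi-invariant; since the $T$-action on $X$ is effective this subgroup is all of $\M$, and intersecting with $\Mdp$ shows that every element of $\Mdp$ occurs. Hence the $\frac{T}{\Tsub}$-weights generate $\Mdp$ and the action is effective.

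The main obstacle is the effectiveness statement, specifically ruling out a nontrivial finite kernel of $\frac{T}{\Tsub}\to\operatorname{Aut}(Y_v)$. The resolution above is that working with rational rather than regular semi-invariants turns the weight set into a genuine subgroup of $\M$, so effectiveness of $T$ propagates to the saturated sublattice $\Mdp$. As an alternative I could argue geometrically: effectiveness of $T$ forces the generic $T$-stabilizer to be trivial, and if $t\Tsub$ fixed $Y_v$ pointwise then for a generic stable $x$ one would obtain $t\cdot x\in \Tsub\cdot x$, whence $t\in\Tsub$; this avoids function fields but requires nonemptiness of the stable locus.
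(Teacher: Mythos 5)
Your argument is correct in substance, but it reaches effectiveness by a genuinely different mechanism than the paper. Both proofs begin the same way: $A_{(v)}$ is a $T$-stable graded subalgebra, $\Tsub$ acts on each piece $A_{nv}$ through a single character, and the $T$-action therefore descends through $\Proj$ to a $T/\Tsub$-action on $Y_v$. For effectiveness, the paper works chart by chart: it covers $Y_v$ by the affine pieces $\Spec([A_f]_0)$ and tries to exhibit a full-dimensional $\Mdp$-weight cone already inside the subalgebra $A_0=\bigoplus_{i(u)=0}A_u$ of \emph{regular} semi-invariants, citing effectiveness of $T$ on $X$. You instead pass to \emph{rational} semi-invariants: the weights of nonzero $T$-homogeneous rational functions form a subgroup of $\M$ (being closed under ratios), this subgroup equals $\M$ by effectiveness of $T$, hence every element of $\Mdp$ is attained, and such functions are $\Tsub$-invariant and descend to $Y_v$. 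Your route is the more robust one: the paper's claim that $\Cone\set{u\in\M}{i(u)=0,\ A_u\neq 0}$ has dimension equal to the rank of $\Mdp$ does not follow from effectiveness of $T$ alone (for $X=\A^2$ with the standard $(\C^*)^2$-action and $\Tsub$ the diagonal subtorus one has $A_0=\C$ while $\Mdp$ has rank one; effectiveness on $Y_v\cong\mathbb{P}^1$ is witnessed only by the ratio $y/x\in[A_x]_0$), whereas ratios $g/f$ with $g\in A_{u'}$, $f\in A_u$ and $i(u)=i(u')$ are precisely the elements of $[A_f]_0$ that make the chart-by-chart argument work. Two imprecisions in your write-up are worth fixing. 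First, $\C(X)^{\Tsub}$ is not literally $\bigoplus_{u\in\Mdp}\C(X)_u$, since rational functions do not decompose into weight vectors; what you need, and what is true, is only that every weight in $\Mdp$ is realized by some homogeneous invariant rational function. Second, the identification $\C(Y_v)=\C(X)^{\Tsub}$ needs justification and can fail for degenerate $v$ (for $v=0$ in the example above $Y_v$ is a point, and the statement itself requires $v$ sufficiently general --- an issue the paper shares); this is best sidestepped by writing your semi-invariant directly as a ratio of two elements of $A_{(v)}$ of the same $\Proj$-degree, which manifestly lives on a chart $\Spec([A_f]_0)$.
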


\begin{proof}
The set of semistable points $\Xsss(v)$ is a $\s$-invariant open subset of $X$. Moreover from Proposition \ref{semistable}, it is $T$-invariant. On basic open subsets of $Y_v$, the torus $\frac{T}{\s}$ acts effectively. Consider $f \in A_{(v)}$, in particular, we choose $f \in A_u$ for $u \in M$ such that $i(u)=nv$, for some $n \in \Z_{\geq 0}$. The sets $\Spec([A_f]_0)$ covers $Y_u$ and it is enough to prove that  $\frac{T}{\s}$ acts effectively on each $\Spec([A_f]_0)$, or equivalently, that $[A_f]_0$ ($0 \in \Mp$) admits an $\Mdp$ grading such that the weight cone is full dimension. Note that $A_0=\bigoplus_{u \in \Mdp}A_u$, which induces an $\Mdp$ grading on $[A_f]_0$ . Since $A_0=\underset{{u \in \M \; i(u)=0}}{\bigoplus}A_u\subset [A_f]_0$, and $T$ acts effectively on $X$, (the dimension of $\Cone(\set{u \in \M}{i(u)=0}$ is equal to the rank of $\Mdp$). Then, the $\frac{T}{\s}$ action is effective on $\Spec([A_0])$, and hence $\frac{T}{\s}$ acts effectively on $\Spec([A_f]_0)$.
\end{proof}
Using proposition \ref{efp}, we are going to prove that there is a $\frac{T}{\s}$ action on $\y$. From \cite[Proposition 2.9, Definition 2.8]{bh:gittoric}, the collection of all GIT-cones define the GIT-fan, which we denote by $\Sigma_{\s}$. The map $v \to \Xsss(v)$ is constant on $\text{relint}(\lambdas(v))$. So for $\lambda^\prime \in \Sigma_{\s}$ and $w \in \text{relint}(\lambda^\prime)$ if we write $W_{\lambda^{\prime}}=\Xsss(w)$, $Y_{\lambda^\prime}=Y_w$, we have the following commutative diagram

\begin{equation*}
\begin{tikzcd}
    W_{\s} \arrow[r, "\map{j}{\lambda^\prime}{}"] \arrow[d, "\map{q}{}{}"] & W_{\lambda^\prime} \arrow[r, "\map{j}{\lambda^\prime}{\gamma^\prime}"] \arrow[d, "\map{q}{\lambda^\prime}{}"] & W_{\gamma^\prime} \arrow[r, "\map{j}{\lambda^\prime}{0}"] \arrow[d, "\map{q}{\gamma^\prime}{}"]  & X = W_0\arrow[dddd, "\map{q}{0}{}"] \\
    \Ysub   \arrow[r, "\map{p}{\lambda^\prime}{}"] \arrow[rrrddd, "\map{p}{0}{}" ]           & Y_{\lambda^\prime} \arrow[r, "\map{p}{\lambda^\prime}{\gamma^\prime}"] \arrow[rrddd,"\map{p}{\lambda^\prime}{0}"]         & 
    Y_{\gamma^\prime} \arrow[rddd,"\map{p}{\gamma^\prime}{0}"]\\
                               &                                   & 
                        &      \\ &          &         & \\ &    &    & Y_0
\end{tikzcd} \label{maindiagram}
\end{equation*}

  All the $\map{j}{-}{-}$ are inclusion maps, so the inverse limit, $W_{\s}$, is the intersection of sets of semistable points. The
$\y$ is normalization of a canonical component. Let $Y_1$ is limit 
$\{ Y_{\lambdas} \}'s$ and \[ \Ysub= \text{Norm}(\overline{q(W_{\s})}),\label{Yistorusinvariant} \]
where Norm(-) denotes the normalization. We have the following commutative diagram

\begin{equation*}
    \begin{tikzcd}
        W_{\lambda^\prime} \arrow[r, "\map{j}{\lambda^\prime}{\gamma^\prime}"] \arrow[d, "\map{q}{\lambda^\prime}{}"] &  W_{\gamma^\prime}  \arrow[d, "\map{q}{\gamma^\prime}{}"] \\
        Y_{\lambda^\prime} \arrow[r, "\map{p}{\lambda^\prime}{\gamma^\prime}"] & Y_{\gamma^\prime}
    \end{tikzcd}
\end{equation*}
from  the Proj construction, $\map{q}{\lambda^\prime}{}$ and  $\map{q}{\gamma^\prime}{}$ are torus equivariant maps with respect to the canonical torus map $T \to \frac{T}{\s}$. Also the map $\map{p}{\lambda^\prime}{\gamma^\prime}$ is a $\frac{T}{\Tsub}$-equivariant map. Now we shall demonstrate that $\Ysub$ admits canonical $ \frac{T}{\Tsub}$ action. 
To prove $\frac{T}{\Tsub}$ acts effectively on $\Ysub$, we required some evident statements which are given below.

\begin{lemma} \label{efl}
    Let $Y$ and $\Tsub$ be are two varieties with $T$ actions, the map $v : Y \to Y^\prime  $ is a $T$-equivariant birational map with $T$ acts effectively on $Y^\prime$ then $T$ acts effectively on $Y$.
    
\end{lemma}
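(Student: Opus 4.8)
The plan is to prove the contrapositive, or equivalently to exploit the fact that effectivity of a torus action is detected on a dense open subset. Since $v \colon Y \to Y'$ is birational, there exist dense open subsets $U \subseteq Y$ and $U' \subseteq Y'$ such that $v|_U \colon U \to U'$ is an isomorphism. Because $T$ acts effectively on $Y'$, it acts effectively on the dense open $U'$ (the kernel of the action on $U'$ would be a subtorus, and a subtorus acting trivially on a dense open of an irreducible variety acts trivially everywhere, contradicting effectivity on $Y'$). Transporting this isomorphism back, $T$ acts effectively on $U$, and hence on $Y$ by the same density argument: if some $t \in T$ acted trivially on all of $Y$, it would in particular act trivially on $U$, contradicting effectivity there.

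The key steps, in order, are as follows. First I would make precise the notion of effectivity: the action of $T$ on a variety $Z$ is effective if the only $t \in T$ (equivalently, the only subtorus) fixing every point of $Z$ is the identity. Second, I would record the standard fact that for an irreducible variety $Z$ with a $T$-action, an element acts trivially on $Z$ if and only if it acts trivially on some (any) nonempty open subset; this is where irreducibility and the separatedness of the $T$-action are used. Third, I would produce the $T$-invariant dense open sets over which $v$ restricts to an isomorphism — here I must use that $v$ is $T$-equivariant, so that the locus where $v$ is an isomorphism can be taken $T$-stable, by intersecting the domain of definition of $v$ and of its inverse with their $T$-translates. Finally, I would chain these facts together to conclude effectivity on $Y$.

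The main obstacle I anticipate is the third step: ensuring that the open locus on which the birational map $v$ is an isomorphism can be chosen $T$-invariant and still dense. A priori the isomorphism locus of a birational map need not respect the group action, so I would argue that since $v$ is $T$-equivariant and $T$ is connected, the maximal open subset $V \subseteq Y$ on which $v$ is defined and restricts to an isomorphism onto its image is automatically $T$-stable: $T$-equivariance means $v \circ t = t \circ v$ wherever defined, so the domain of definition and the isomorphism locus are preserved by each $t \in T$. Once $V$ is known to be $T$-stable and dense, the rest follows from the density principle. A secondary point to handle carefully is that effectivity is really about subtori (not arbitrary closed subgroups): the kernel of a torus action is a closed subgroup whose identity component is a subtorus, and for the downstream application (Proposition \ref{efp} and the effectivity of the $\frac{T}{\Tsub}$-action on $\Ysub$) it suffices to rule out nontrivial subtori in the kernel, which is exactly what the density argument delivers.
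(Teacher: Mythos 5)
The paper offers no proof of this lemma at all --- it is introduced as one of several ``evident statements'' and left unproved --- so there is nothing to compare your argument against; judged on its own, your proposal is correct. That said, you can streamline it considerably: the full isomorphism locus of the birational map, and its $T$-stability, are not actually needed. It suffices that $v$ is dominant. Indeed, fix $t \in T$ acting trivially on $Y$; then $t$ acts trivially on the dense open $U$ where $v$ is defined, and for $y \in U$ equivariance gives $t \cdot v(y) = v(t \cdot y) = v(y)$, so $t$ fixes the dense subset $v(U) \subseteq Y'$ pointwise; since the fixed locus of $t$ is closed (separatedness of $Y'$), $t$ acts trivially on $Y'$, whence $t = e$ by effectivity on $Y'$. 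This elementwise formulation also sidesteps the small inaccuracy in your first paragraph, where you assert that the kernel of the action on $U'$ ``would be a subtorus'': the kernel of a torus action is a closed subgroup that may have a nontrivial finite part, so arguing with individual elements $t$ (rather than with subtori) is both simpler and strictly stronger, and you correctly flag this issue yourself at the end. Your extra care about the $T$-stability of the isomorphism locus is not wrong, just unnecessary for this statement; it would matter only if you needed the induced action on $U$ to be well defined as an action, which the density argument never uses.
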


\begin{lemma} \label{closure}
Let $Y$ be a topological space, and $\psi : Y \to Y$ be a continuous map with $A \subset X$ such that $\psi(A) \subset A$, then $\psi(\overline{A}) \subset \overline{A}$.
\end{lemma}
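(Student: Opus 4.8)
The plan is to exploit continuity through the \emph{preimage} of the closed set $\overline{A}$ rather than working directly with images, where continuity is awkward to apply. The core observation is that for a continuous map $\psi$, the preimage of a closed set is closed, and this yields a closed set into which we can trap $\overline{A}$ using the defining minimality property of the closure.

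Concretely, I would first note that $\overline{A}$ is closed, so by continuity of $\psi$ the set $\psi^{-1}(\overline{A})$ is closed in $Y$. Next I would observe that the hypothesis $\psi(A) \subseteq A \subseteq \overline{A}$ says precisely that $A \subseteq \psi^{-1}(\overline{A})$. Since $\psi^{-1}(\overline{A})$ is a closed set containing $A$, and $\overline{A}$ is by definition the smallest closed set containing $A$, it follows that $\overline{A} \subseteq \psi^{-1}(\overline{A})$. Applying $\psi$ to both sides then gives $\psi(\overline{A}) \subseteq \psi\bigl(\psi^{-1}(\overline{A})\bigr) \subseteq \overline{A}$, which is the desired conclusion.

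There is essentially no obstacle here: this is the standard fact that a continuous self-map preserving a subset also preserves its closure, and the entire argument is the single reduction to preimages described above. The only genuine care needed is bookkeeping — phrasing everything via the closed preimage so that the minimality of the closure does all the work. I would also flag the apparent typo in the statement: the hypothesis should read $A \subseteq Y$ rather than $A \subseteq X$, since $X$ never enters the lemma's setup and $\psi$ is a self-map of $Y$; this should be corrected before the proof so that the notation is consistent.
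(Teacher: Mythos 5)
Your proof is correct and is the standard argument: the paper itself states this lemma as one of several ``evident statements'' and gives no proof, so your preimage argument (using that $\psi^{-1}(\overline{A})$ is a closed set containing $A$, hence containing $\overline{A}$) supplies exactly the routine verification the authors omitted. Your remark about the typo $A \subseteq X$ versus $A \subseteq Y$ is also well taken.
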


\begin{lemma}\label{normal}
    Let $X$ be a $T$-variety, then $\text{Norm}(X)$ has a torus action, satisfying the following commutative diagram,
\begin{equation*}
    \begin{tikzcd}
        \text{Norm}(X) \arrow[r, "\overline{t}"] \arrow[d, "g"]& \text{Norm}(X) \arrow[d,"g"] \\
        X \arrow[r, "t"] & X
    \end{tikzcd}.
\end{equation*}

\end{lemma}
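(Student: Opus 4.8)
The plan is to obtain the $T$-action on the normalization by lifting the \emph{whole} action morphism through the normalization map, using the universal property of normalization rather than lifting each group element separately. Recall that this universal property states: if $Z$ is a normal integral variety and $f\colon Z\to X$ is a dominant morphism, then $f$ factors uniquely as $f=g\circ\tilde f$ for some morphism $\tilde f\colon Z\to\text{Norm}(X)$, where $g\colon\text{Norm}(X)\to X$ is the normalization. Write $\widetilde X=\text{Norm}(X)$, let $a\colon T\times X\to X$ denote the given action, and consider the composite
\[
  \Phi \;=\; a\circ(\mathrm{id}_T\times g)\colon\; T\times\widetilde X\longrightarrow X .
\]

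Next I would verify that $\Phi$ satisfies the hypotheses of the universal property. Since we work over an algebraically closed field of characteristic zero, $T$ is smooth and geometrically integral, so $T\times\widetilde X$ is again integral, and a product of a smooth variety with a normal variety is normal; hence $T\times\widetilde X$ is a normal integral variety. Moreover $\Phi$ is dominant: the action map $a$ is surjective (for fixed $t$ the map $a(t,-)\colon X\to X$ is an isomorphism) and $g$ is surjective, so $\Phi$ is surjective, in particular dominant. The universal property then yields a \emph{unique} morphism
\[
  \tilde a\colon\; T\times\widetilde X\longrightarrow\widetilde X,\qquad g\circ\tilde a=\Phi=a\circ(\mathrm{id}_T\times g).
\]
For a fixed $t\in T$, setting $\bar t=\tilde a(t,-)\colon\widetilde X\to\widetilde X$ gives exactly the asserted commutative square, since $g\circ\bar t=a(t,g(-))=t\circ g$.

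It remains to check that $\tilde a$ is a genuine group action, and here uniqueness in the universal property does all the work. Consider the two morphisms $T\times T\times\widetilde X\to\widetilde X$ built from $\tilde a$ via associativity, namely $\tilde a\circ(\mathrm{id}_T\times\tilde a)$ and $\tilde a\circ(m\times\mathrm{id}_{\widetilde X})$, where $m\colon T\times T\to T$ is multiplication. Composing either with $g$ produces the same morphism $T\times T\times\widetilde X\to X$, because $a$ is an action and $g$ is equivariant; since the source is normal and integral and the common composite is dominant, uniqueness of the lift forces the two morphisms to coincide. The unit axiom is identical: both $\tilde a(e,-)$ and $\mathrm{id}_{\widetilde X}$ lift the dominant morphism $g$, so they agree. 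Thus $\tilde a$ is a $T$-action making $g$ equivariant.

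The construction is essentially forced once the universal property applies, so there is no serious obstacle; the only points demanding attention are the mild hypotheses that $T\times\widetilde X$ is normal and integral and that $\Phi$ is dominant, after which both the lift itself and the verification of the action axioms are automatic by uniqueness.
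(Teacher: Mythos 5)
Your proof is correct. The paper actually offers no argument for this lemma at all --- it is listed among the ``evident statements'' needed for Lemma \ref{effa} and left unproved --- so there is nothing to compare against; your route via the universal property of normalization (lifting the whole action morphism $a\circ(\mathrm{id}_T\times g)$ at once, using that $T\times\text{Norm}(X)$ is normal and integral and that the composite is dominant, then deducing the action axioms from uniqueness of the lift) is the standard and complete way to establish it, and it in fact proves slightly more than the stated diagram, namely that the lifted action is algebraic rather than merely a compatible family of automorphisms $\overline{t}$.
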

\begin{lemma} \label{effa}
    Consider $q_1: W_{\s} \to Y_1$ the induced by the commutative diagram \ref{maindiagram}. Then, $q_1$ is a torus equivariant map, and it defines $\frac{T}{\s}$ action on $Y^\prime$.
\end{lemma}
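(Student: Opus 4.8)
The plan is to assemble \textbf{Lemma \ref{effa}} from the pieces already established, treating it as the statement that the maps $\map{q}{\lambda^\prime}{}$ glue, after passing to the inverse limit $W_{\s}$ and the direct limit $Y_1$, into a single torus equivariant map $q_1 \colon W_{\s} \to Y_1$ that carries the $\frac{T}{\s}$ action. First I would record that each square in the second commutative diagram commutes with all horizontal maps being $\frac{T}{\s}$-equivariant and all vertical quotient maps $\map{q}{\lambda^\prime}{}$ being torus equivariant with respect to $T \to \frac{T}{\s}$, as noted just before the lemma from the Proj construction. Since $W_{\s}$ is the inverse limit (the intersection of the $W_{\lambda^\prime}$) and $Y_1$ is the limit of the $\{ Y_{\lambdas} \}$, the universal property of (co)limits produces a unique induced map $q_1$ compatible with every $\map{q}{\lambda^\prime}{}$; compatibility of the torus actions at each finite stage forces $q_1$ to be torus equivariant as well.

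Next I would transport the action across $q_1$. Because $\frac{T}{\s}$ acts on each $Y_{\lambda^\prime}$ (this is Proposition \ref{efp}) and the transition maps $\map{p}{\lambda^\prime}{\gamma^\prime}$ are $\frac{T}{\s}$-equivariant, these actions are compatible along the limiting system and descend to a $\frac{T}{\s}$ action on $Y_1$. The equivariance of $q_1$ then says precisely that $q_1$ intertwines the $T$ action upstairs (through $T \to \frac{T}{\s}$) with this $\frac{T}{\s}$ action downstairs, which is the content of the claim that $q_1$ ``defines'' the $\frac{T}{\s}$ action on $Y^\prime$. To pass from $Y_1$ to $\Ysub = \text{Norm}(\overline{q(W_{\s})})$ I would invoke Lemma \ref{closure} to see that the image $\overline{q(W_{\s})}$ is preserved by each group element (applying the lemma to $\psi = $ the action of a fixed $t \in \frac{T}{\s}$ and $A = q(W_{\s})$), and then Lemma \ref{normal} to lift the action to the normalization, so that the $\frac{T}{\s}$ action genuinely lives on $\Ysub$.

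The main obstacle I anticipate is making the limit argument precise: the diagram \ref{maindiagram} is indexed by the GIT-fan $\Sigma_{\s}$ via the order on cones, and one must check that the system of $W_{\lambda^\prime}$ (with inclusions $\map{j}{\lambda^\prime}{\gamma^\prime}$) and the system of $Y_{\lambda^\prime}$ (with the $\map{p}{\lambda^\prime}{\gamma^\prime}$) are genuinely directed/inverse systems so that the limits $W_{\s}$ and $Y_1$ and the induced $q_1$ are well defined. Here I would lean on the fact, recorded above, that the $\map{j}{-}{-}$ are all inclusions so $W_{\s}$ is simply an intersection of open subsets, which sidesteps any delicate cocycle condition on the $W$ side; the subtlety is entirely on the $Y$ side, where one must confirm that the quotient maps are compatible with the transition maps $\map{p}{\lambda^\prime}{\gamma^\prime}$, i.e.\ that $\map{p}{\lambda^\prime}{\gamma^\prime} \circ \map{q}{\lambda^\prime}{} = \map{q}{\gamma^\prime}{} \circ \map{j}{\lambda^\prime}{\gamma^\prime}$, which is exactly the commutativity of the displayed square. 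Once that square is in hand the remainder is the formal universal-property argument together with the three auxiliary lemmas, so I expect the verification of the equivariant gluing to be the only nontrivial step and everything after it to be routine.
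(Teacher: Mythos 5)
Your proposal is correct and follows essentially the same route as the paper: the paper's own proof is the one-line citation ``From Lemma \ref{closure} and \ref{normal}'', and you invoke exactly those two lemmas in exactly the intended way (preservation of $\overline{q(W_{\s})}$ under each $t \in \frac{T}{\s}$, then lifting to the normalization), while additionally spelling out the limit/equivariance bookkeeping that the paper leaves implicit.
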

\begin{proof}
    From Lemma \ref{closure} and \ref{normal}
\end{proof}

The map $\map{p}{\lambda^\prime}{}$ is given by,

\[  \label{arrows} \text{Norma}(\overline{q(W_{\s})}) \to \overline{q(W_{\s})} \hookrightarrow Y_1 \to Y_{\lambda^\prime} \]

and each arrow is a torus equivariant map.

\begin{proposition}
    $\y$  is a $T$-variety.
\end{proposition}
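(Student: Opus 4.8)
The plan is to confirm the two defining properties of a $\frac{T}{\s}$-variety for $\y$, namely normality and the existence of an \emph{effective} $\frac{T}{\s}$-action. Normality is built into the construction, since $\y=\text{Norm}(\overline{q(W_{\s})})$ is a normalization.

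First I would equip $\y$ with a $\frac{T}{\s}$-action. By Proposition \ref{efp} every local quotient $Y_{\lambda^\prime}=Y_v$ carries an effective $\frac{T}{\s}$-action, and the transition maps $\map{p}{\lambda^\prime}{\gamma^\prime}$ are $\frac{T}{\s}$-equivariant, so the limit $Y_1$ acquires a $\frac{T}{\s}$-action for which $q_1\colon W_{\s}\to Y_1$ is equivariant (Lemma \ref{effa}). The locus $W_{\s}$ is the intersection of the semistable sets and hence $T$-invariant by Proposition \ref{semistable}, so its image $q(W_{\s})$ is $\frac{T}{\s}$-invariant. Applying Lemma \ref{closure} to the action of each group element on $Y_1$ shows that the closure $\overline{q(W_{\s})}$ is still $\frac{T}{\s}$-invariant, whence it is a $\frac{T}{\s}$-variety; Lemma \ref{normal} then lifts the action to the normalization, giving the canonical $\frac{T}{\s}$-action on $\y$.

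Next I would prove effectiveness. The projection $\map{p}{\lambda^\prime}{}$ factors as the composition displayed just above (through $\overline{q(W_{\s})}\hookrightarrow Y_1\to Y_{\lambda^\prime}$), and since each arrow there is $\frac{T}{\s}$-equivariant, so is $\map{p}{\lambda^\prime}{}$. As $W_{\s}$ is a dense open subset of $X$ and the various GIT quotients $Y_{\lambda^\prime}$ are mutually birational, the morphism $\map{p}{\lambda^\prime}{}\colon \y\to Y_{\lambda^\prime}$ is birational. The $\frac{T}{\s}$-action on $Y_{\lambda^\prime}$ is effective by Proposition \ref{efp}, so Lemma \ref{efl}, applied to the birational equivariant map $\map{p}{\lambda^\prime}{}$, forces the $\frac{T}{\s}$-action on $\y$ to be effective as well.

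The main obstacle is justifying the birationality of $\map{p}{\lambda^\prime}{}$, which is exactly what licenses Lemma \ref{efl}. To settle it one must produce a dense open subset on which $q(W_{\s})$ maps isomorphically onto a dense open of $Y_{\lambda^\prime}$, using that the dense open inclusion $W_{\s}\subset W_{\lambda^\prime}$ induces a dominant map of quotients and that normalization does not change the function field. Once this is in hand, the rest is a formal assembly of the results cited above.
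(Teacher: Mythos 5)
Your proposal follows essentially the same route as the paper: normality is immediate from the normalization in the construction, and effectiveness is deduced by applying Lemma \ref{efl} to the chain of equivariant maps $\y \to \overline{q(W_{\s})} \hookrightarrow Y_1 \to Y_{\lambda^\prime}$, with Lemmas \ref{closure}, \ref{normal} and \ref{effa} supplying the $\frac{T}{\s}$-action on $\y$. You are in fact more careful than the paper, which invokes Lemma \ref{efl} without verifying the birationality (or at least dominance) of $\map{p}{\lambda^\prime}{}$ that the lemma requires; your observation that this follows from the density of $W_{\s}$ in $W_{\lambda^\prime}$ and the surjectivity of the quotient maps is exactly the point the paper leaves implicit.
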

\begin{proof}
    We have to prove that $\y$ is normal variety and action of $\frac{T}{\s}$-effective. From construction, it is a normal variety and  from Lemma \ref{efl} and above arrows \ref{arrows}, the action is effective. 
\end{proof}


\section{ The proper polyhedral divisor.}\label{torusinvariantppdivisor}

Let $\Mdp$ be a character lattice associated to $\frac{T}{\Tsub}$. Consider the exact sequence \ref{exactseq}. Construction of the pp-divisor requires a homomorphism $s^\prime : \Mp \to Q(A)^*$. Note that $i(\omega)$ is a full dimensional cone, and given a $v \in i(\omega) \cap \Mp$, there is a $k \in \mathbb{N}$, such that $kv$ is saturated. For each $v \in i(\omega)$ saturated, we will define a Cartier divisor $\plD(v)$. For $v \in \text{int}(\lambdas) $ saturated, $\set{Y_{\lambdas,f}}{ f \in A_u, \text{where} \: \: i(u) = v }$ is an open cover for $Y_{\lambdas}.$ Consider the open cover $\y_f= {\map{p}{\lambda^\prime}{}}^{-1}(Y_{\lambdas,f})$.
Since $T$ acts effectively on $X$, we have a section $s: M \to Q(A)^*$ such that $s(u)$ is $u$-homogeneous. Consider the section $s^\prime : \Mp \to Q(A)^*$\label{sec} defined: \[ s^\prime(v)= s(u), \: \: \text{For fix } \: u  \in M \: \text{such that} \: i(u)=v.\]

Now consider the Cartier divisor \[\plD^\prime(v)=(\y_f,\frac{s^\prime(i(u))}{f}).\]

Since $\map{p}{\lambdas}{}$ are torus equivariant maps, so $Y_f$ are  torus invariant open subset. $\frac{s^\prime((i(u))}{f}$ is homogeneous of degree $\deg(s^\prime(i(u)))-\deg(f) \in \Mdp$ \footnote{Note that degree of element $\frac{s^\prime(i(u))}{f}$ is equal to 0 in $\Mp$ }. This defines a torus invariant pp-divisor on $\Ysub$,

\[  \plD^\prime : i(\omega) \to \text{CaDiv}_{\Q}(\Ysub), \: \; \; \: \plD^\prime(v)=\frac{1}{k} \cdot \plD^\prime(kv). \]
where $kv$ is a saturated multiple of $v$.

\bibliographystyle{amsalpha}
\bibliography{references}

\end{document}